\newtheorem{theorem}{Theorem}[section]
\newtheorem{proposition}[theorem]{Proposition}
\newtheorem{corollary}[theorem]{Corollary}
\newtheorem{remark}[theorem]{Remark}
\newtheorem{example}[theorem]{Example}
\newcommand{\CC}{\mathbb C}
\newcommand{\RR}{\mathbb R}
\newcommand{\mf}{\mathfrak}
\newcommand{\ad}{\mbox{ad}}
\newcommand{\F}{\mathbb{F}_{\Theta}}
\newcommand{\Pt}{R_{\Theta}}
\newcommand{\p}{\mf{p}}
\newcommand{\m}{\mf{m}}
\begin{document} 
\title{Invariant Einstein metrics on generalized flag manifolds of $Sp(n)$ and $SO(2n)$}

\author{Luciana Aparecida Alves \thanks{Federal University of Uberl\^andia, luciana.postigo@gmail.com} and Neiton Pereira da Silva \thanks{Federal University of Uberl\^andia, neitonps@gmail.com}}
\maketitle




\begin{abstract}
 
It is well known that the Einstein equation on a Riemannian flag manifold $(G/K,g)$ reduces to an algebraic system if $g$ is a $G$-invariant metric. In this paper we
obtain explicitly new invariant Einstein metrics on generalized flag manifolds of $Sp(n)$ and $SO(2n)$; and we compute the Einstein system for generalized flag manifolds of type $Sp(n)$. We also consider the
isometric problem for these Einstein metrics.

\end{abstract}

\noindent Mathematics Subject Classifications 53C25, 53C30, 14M17,
14M15, 22E46.  \newline

\noindent Keywords: Einstein metrics, Flag manifolds, t-roots, isotropy representation.

\section{Introduction}

A Riemannian manifold $(M,g)$ is called Einstein manifold if its Ricci tensor $Ric(g)$ satisfies the Einstein equation $Ric(g)=cg$, for some real constant $c$. 
The study of Einstein manifold is related with several areas of mathematics and has important applications on physics.(see \cite{Besse}, for example). 

Let $G$ be a connected compact semisimple Lie group and  $G/K$ a flag manifold, where $K$ is the centralizer of a torus in $G$. It is well known that the Einstein equation of a $G$-invariant (or simply invariant) metric $g$ on a flag manifold $G/K$ reduces to an (complicated in most cases) algebraic system. It is also known that $G/K$ admits an invariant K\"ahler Einstein metric associated to the canonical complex structure, see \cite{BH}. The problem of determining invariant Einstein metrics non K\"ahler has been studied by several authors, see for example \cite{Arv art}, \cite{art 1}, \cite{kimura}, \cite{Sakane} and \cite{w-z}.

In the algebraic Einstein system for flag manifolds, the number of unknowns is equal the number of equations and it is determined by the amount summands in the isotropy representation. In this sense, several authors have approached the problem of finding new Einstein metrics considering flag manifold with few isotropy summands, see \cite{Sakane}, \cite{Da Silva} and \cite{Arv-Chry}. Recently Wang-Zhao obtained, in \cite{w-z}, new invariant Einstein metrics on certain generalized flag manifolds with six isotropy summands using a computational method. 

Few authors have obtained new invariant Einstein metrics on generalized flag manifolds with many isotropy summands. For instance, Arvanitoyeorgos presented new Einstein metrics on generalized flag manifolds of type $SU(n)$ and $SO(2n)$, see \cite{Arv art}. In \cite{Sakane}, Sakane obtained new invariant Einstein metrics on full flag manifolds of a classical Lie group. 

Bohm-Wang-Ziller  conjectured in \cite{Bohm-Wang-Ziller} that if $G/H$ is a compact homogeneous space whose isotropy re-\ presentation consists of pairwise inequivalent irreducible summands, e.g. when rank $G$ = rank $H$, then the algebraic Einstein equations have only finitely many real solutions. In particular, this problem is opened yet for flag manifolds.

In this paper, following the method used in \cite{Arv art}, we computed explicitly the Einstein equations for generalized flag manifolds of type $Sp(n)$. Our main results, which extend partially the works \cite{Arv art} and \cite{Sakane}, are:

\textbf{Theorem A}
\emph{The flag manifold $Sp(n)/ U\left(
m\right) \times\cdots\times U\left( m\right)$ admits at least two not K\"{a}hler and non isometric invariant Einstein metrics
 \begin{eqnarray*}
 f &=& g=1\\ \\
h &=& \frac{2(m+1)+(n-2m+2)m \pm \sqrt{\Delta}}{2\left[(n-m)m+m+1\right]},\\ \\
c &=&\dfrac{4m(n-2m+2)[mn-m^2+m+1]+(m+1)[6mn-4m^2+4]\mp 2(m+1)\sqrt{\Delta}}{16(n+1)\left[(n-m)m+m+1\right]},
\end{eqnarray*}
where $\Delta=m^2n^2-4(m^3+m)n+(4m^4-8m^3+8m^2-4)$, $n\geq 2m$ and $n m \geq 2 \left[ m^2+1+\sqrt{2(m^3+1)}\right]$.
}

\vspace*{1cm}

\textbf{Theorem B}
\emph{The family of flag manifolds $SO(2n)/U(m)^s$, $m>1$, admits at least two non K\"ahler Einstein metrics, given by
\begin{eqnarray*}
f&=&g=1\\ \\
 h&=&\dfrac{n+2(m-1)\pm \sqrt{\Delta}}{2(n-1)}\\ \\
c&=&\dfrac{(n-2m-2)(2n-m-1)\mp\sqrt{\Delta}}{8(n-1)^2}
\end{eqnarray*}
where $n=sm$ and $\Delta=n^2-4(m-1)n+4(m^2-1)>0$. Besides these Einstein metrics are non isometric.}

\vspace*{1cm}

This paper is organized as follows: In Section 2 we discuss the construction of flag manifolds of a complex simple Lie group, and we use Weyl basis to see these spaces as the quotient $U/K_{\Theta}$ of a semisimple compact Lie group
$U\subset G$ modulo the centralizer $K_{\Theta}$ of a torus in
$U$. In Section 3 we recall the description of invariant metrics and its Ricci tensor on flag manifolds. The problem of isometric and non isometric metrics is also treated in the Section 4. In Section 5, we 
prove our results solving explicitly the algebraic Einstein system with a specific restriction condition on the invariant metrics.


\section{Preliminaries}
\label{sec:1}
In this section we set up our notation and present the standard
theory of partial (or generalized) flag manifolds associated with semisimple
Lie algebras, see for example \cite{SM N}, \cite{nir e sofia}, for similar description of flag manifolds.

 Let $\mathfrak{g}$ be a finite-dimensional semisimple complex Lie
algebra and take a Lie group $G$ with Lie algebra $\mathfrak{g}$. Let
$\mathfrak{h}$ be a Cartan subalgebra. We denote by $R$ the system of
roots of $(\mathfrak{g},\mathfrak{h})$. A root $\alpha\in R$ is a linear
functional on $\mathfrak{g}$. It uniquely determines an element
$H_{\alpha}\in\mathfrak{h}$ by the Riesz representation $\alpha(X)=B(
X,H_\alpha)$, $X\in\mathfrak{g}$, with respect to the Killing form
$B(\cdot,\cdot)$ of $\mf{g}$. The Lie algebra $\mathfrak{g}$ has the following
decomposition
\[
\mathfrak{g}=\mathfrak{h}\oplus\sum_{\alpha\in R}\mathfrak{g}_{\alpha}
\]
where $\mathfrak{g}_{\alpha}$ is the one-dimensional root space corresponding to $\alpha$. Besides the eigenvectors $E_{\alpha}\in\mathfrak{g}_{\alpha}$ satisfy the following equation

\begin{equation}
\left[  E_{\alpha},E_{-\alpha}\right]  =B\left(
E_{\alpha},E_{-\alpha}\right) H_{\alpha}. \label{igualdade}
\end{equation}

We fix a system $\Sigma$ of simple roots of $R$ and denote by
$R^+$ and $R^{-}$ the corresponding set of positive and
negative roots, respectively. Let $\Theta\subset\Sigma$ be a
subset, define
\begin{align}
R_{\Theta}:=\langle\Theta\rangle\cap R\hspace{1.5cm}
\Pt^\pm:=\langle\Theta\rangle\cap R^{\pm}.\nonumber
\end{align}
We denote by $R_M:=R\setminus\Pt$ the complementary set of roots. Note that
\[
\p_{\Theta}:=\mf{h}\oplus\sum_{\alpha\in R^+}\mf{g}_{\alpha}\oplus\sum_{\alpha\in R_{\Theta}^-}\mf{g}_{\alpha}
\]
is a parabolic subalgebra, since it contains the Borel subalgebra
$\mf{b}^+=\mf{h}\oplus\sum\limits_{\alpha\in R^+}\mf{g}_{\alpha}$.

The partial flag manifold determined by the choice $\Theta\subset R$ is the
homogeneous space $\F=G/P_{\Theta}$, where $P_{\Theta}$ is the normalizer of
$\mf{p}_{\Theta}$ in $G$. In the special case $\Theta=\emptyset$, we obtain the \emph{full} (or maximal) flag manifold $\mathbb{F}=G/B$ associated with $R$, where
$B$ is the normalizer of the Borel subalgebra $\mf{b}^+=\mf{h}\oplus\sum\limits_{\alpha\in R^+}\mf{g}_{\alpha}$ in $G$. 

For further use, to each $\alpha\in R_M$, define the following sets
\begin{align}\label{sets}
\Pt(\alpha)&:=\left\{\phi\in \Pt:(\alpha+\phi)\in R\right\}\quad \text{and}\nonumber\\
R_M(\alpha)&:=\{\beta\in R_M:(\alpha+\beta)\in R_M\}.
\end{align}

Now we will discuss the construction of any flag manifold as the
quotient $U/K_{\Theta}$ of a semisimple compact Lie group
$U\subset G$ modulo the centralizer $K_{\Theta}$ of a torus in
$U$. We fix once and for all a Weyl base of $\mf{g}$ which amounts
to giving $X_\alpha\in\mf{g_{\alpha}}$, $H_{\alpha}\in\mf{h}$ with $\alpha\in R$, with the standard
properties:
\begin{equation}\label{base weyl}
\begin{tabular}
[c]{lll}
$B( X_{\alpha},X_{\beta}) =\left\{
\begin{array}
[c]{cc}%
1, & \alpha+\beta=0,\\
0, & \text{otherwise};
\end{array}
\right. $ &\hspace{-.5cm} & $\left[  X_{\alpha},X_{\beta}\right]  =\left\{
\begin{array}
[c]{cc}%
H_{\alpha}\in\mathfrak{h}, & \alpha+\beta=0,\\
N_{\alpha,\beta}X_{\alpha+\beta}, & \alpha+\beta\in R,\\
0, & \text{otherwise.}%
\end{array}
\right.  $
\end{tabular}
\end{equation}
The real numbers $N_{\alpha,\beta}$ are non-zero if and only if
$\alpha+\beta\in R$. Besides that it satisfies
$$
\left\{
\begin{array}
[c]{cc}%
 N_{\alpha,\beta}=-N_{-\alpha,-\beta}=-N_{\beta,\alpha},& \\
\hspace*{-1cm} N_{\alpha,\beta}=N_{\beta,\gamma}=N_{\gamma,\alpha},&\mbox{if}\quad\alpha+\beta+\gamma=0.
\end{array}
\right.
$$

We consider the following two-dimensional real spaces
$\mf{u}_{\alpha}=\text{span}_\mathbb{R}\{A_{\alpha},S_{\alpha}\}$, 
where $\hspace{0.3cm}A_{\alpha}=X_{\alpha}-X_{-\alpha}$
and $\hspace{0.3cm}S_{\alpha}=i(X_{\alpha}+X_{-\alpha})$, with
$\alpha\in R^{+}$.
Then the real Lie algebra
$\mf{u}=i\mf{h}_\mathbb{R}\oplus\sum\mf{u}_{\alpha}$, with $\alpha\in R^{+},$
is a compact real form of $\mf{g}$, where $\mf{h}_\mathbb{R}$ denotes the subspace of $\mf{h}$ spanned by $\{H_\alpha, \alpha\in R\}$.

Let $U=\exp\mf{u}$ be the compact real form of $G$ corresponding
to $\mf{u}$. By the restriction of the action of $G$ on $\F$, we
can see that $U$ acts transitively on $\F$ then $\F=U/K_{\Theta}$,
where $K_{\Theta}=P_{\Theta}\cap U$. The Lie algebra $\mf{k}_{\Theta}$ of $K_\Theta$ is the set of fixed points of the conjugation $\tau\colon X_{\alpha}\mapsto-X_{-\alpha}$ of $\mf{g}$ restricted to $\p_{\Theta}$
$$
\mf{k}_{\Theta}=\mf{u}\cap\p_{\Theta}=i\mf{h}_\mathbb{R}\oplus\sum_{\alpha\in\Pt^+}\mf{u}_{\alpha}.
$$

The tangent space of $\F=U/K_{\Theta}$ at the origin $o=eK_{\Theta}$ can be identified with the orthogonal complement (with respect to the Killing form) of $\mf{k}_{\Theta}$ in $\mf{u}$
$$
T_o\F=\m=\sum\limits_{\alpha\in R_{M}^{+}}\mf{u}_{\alpha}
$$
with $R_{M}^{+}=R_{M}\cap R^+$. Thus we have $\mf{u}=\mf{k}_{\Theta}\oplus\mf{m}$.

On the other hand, there exists a nice way to decompose the tangent space $\m$. It is known (see for example \cite{Alek e Perol} or \cite{Sie}) that $\F$ is a reductive homogeneous space, this means that the adjoint representation of $\mf{k}_{\Theta}$ and $K_{\Theta}$ leaves $\mf{m}$ invariant, i.e. $\ad(\mf{k}_{\Theta})\mf{m}\subset\mf{m}$. Thus we can decompose $\mf{m}$ into a sum of irreducible $\ad (\mf{k}_{\Theta})$ submodules $\mf{m}_i$ of the module $\mf{m}$:
\[
\m= \m_1\oplus\cdots\oplus\m_s.
\]

Now we will see how to obtain each irreducible $\ad (\mf{k}_{\Theta})$ submodules $\m_i$. By complexifying the Lie algebra of $K_{\Theta}$ we obtain
$$
\mf{k}_{\Theta}^{\mathbb{C}}=\mf{h}\oplus\sum_{\alpha\in\Pt}\mf{g}_{\alpha}.
$$
The adjoint representation of $\ad(\mf{k}_{\Theta}^{\mathbb{C}})$ of $\mf{k}_{\Theta}^{\mathbb{C}}$ leaves the complex tangent space $\m^{\mathbb{C}}$ invariant.
Let
$$
\mf{t}:=Z(\mf{k}_{\Theta}^\mathbb{C})\cap i\mf{h}_\mathbb{R}
$$
be the intersection of the center of the subalgebra
$\mf{k}_{\Theta}^\CC$ with $i\mf{h}_\RR$. According to \cite{Arv art}, we can write
$$
\mf{t}=\{H\in i\mf{h}_{\RR}:\alpha(H)=0,\,\text{for all} \,\alpha \in \Pt\}.
$$

Let $i\mf{h}_\RR^{\ast}$ and $\mf{t}^{\ast}$ be the dual vector
space of $i\mf{h}_\RR$ and $\mf{t}$, respectively, and consider the
map $k\colon i\mf{h}_\RR^{\ast}\longrightarrow\mf{t}^{\ast}$ given
by $k(\alpha)=\alpha|_{\mf{t}}$. The linear functionals of
$R_{\mf{t}}:=k(R_{M})$ are called \emph{t-roots}. Denote by $R_{\mf{t}}^+=k(R_M^+)$ the set of positive t-roots. There exists a 1-1 correspondence between positive t-roots and irreducible submodules of the adjoint representation of $\mf{k}_{\Theta}$, see \cite{Alek e Perol}. This correspondence is given by
\begin{equation*}
\xi\longleftrightarrow\mf{m}_{\xi}=\sum_{k(\alpha)=\xi}\mf{u}_{\alpha}
\end{equation*}
with $\xi\in R_{\mf{t}}^+$. Besides these submodules are inequivalents. Hence the tangent space can be decomposed as follows
\begin{equation*}\label{decomp}
\m=\m_{\xi_1}\oplus\cdots\oplus\m_{\xi_s}
\end{equation*}
where $R_t^+=\lbrace \xi_1, \ldots, \xi_s\rbrace$.

\section{Invariant metrics and Ricci tensor on $\F$}

A Riemannian invariant metric on $\F$ is completely determined by a real inner product $g\left(\cdot,\cdot\right)$ on $\mathfrak{m}=T_{o}\F$ which is invariant by the adjoint action of $\mf{k}_{\Theta}$. Besides that any real inner product $\ad(\mf{k}_{\Theta})$-invariant on $\mf{m}$ has the form

\begin{equation}\label{inner product}
g\left(\cdot,\cdot\right)=-\lambda_1 B\left(\cdot,\cdot\right)|_{\mathfrak{m}_{1}\times\mathfrak{m}_{1}}-\cdots
-\lambda_s B\left(\cdot,\cdot\right)|_{\mathfrak{m}_{s}\times\mathfrak{m}_{s}}
\end{equation}\\
where ${\mathfrak{m}}_{i}=\mf{m}_{\xi_i}$ and $\lambda_i=\lambda_{\xi_i}>0$ with $\xi_{i}\in R^{+}_{\mathfrak{t}}$, for $i=1,\ldots,s$. So any invariant Riemannian metric on $\F$ is determined by $|R_{\mf{t}}^+|$ positive parameters. We will call an inner product defined by (\ref{inner product}) as an invariant metric on $\F$.

In a similar way, the Ricci tensor $Ric_g(\cdot.\cdot)$ of a invariant metric on $\F$ depends on $|R_{\mf{t}}^+|$ parameters. Actually, it has the form
\[
Ric_g\left(\cdot,\cdot\right)=-r_ 1\lambda_1 B\left(\cdot,\cdot\right)|_{\mathfrak{m}_{1}\times\mathfrak{m}_{1}}-\cdots
-r_s\lambda_s B\left(\cdot,\cdot\right)|_{\mathfrak{m}_{s}\times\mathfrak{m}_{s}}
\]
where $r_i$ are constants.
Thus an invariant metric $g$ on $\F$ is Einstein iff $r_1=\cdots=r_s$. The next result shows a way to compute the components of the Ricci tensor by means of vectors of Weyl base.

\begin{proposition}(\cite{Arv art})
\label{Ricci} The Ricci tensor for an invariant metric $g$  on $\F$ is
given by
\begin{align}\label{Ric}
Ric\left(  X_{\alpha},X_{\beta}\right)
&=0,\quad\alpha,\beta\in R
_{M},\quad\alpha+\beta\notin R_{M},\nonumber\\
& \\
Ric(X_{\alpha},X_{-\alpha})&=B\left(  \alpha,\alpha\right) +\sum
_{\substack{\phi\in\Pt\\\alpha+\phi\in R}}N_{\alpha,\phi}^{2}+\frac{1}
{4}\sum_{\substack{\beta\in R_{M}\\\alpha+\beta\in R_{M}}}\frac{N_{\alpha,\beta}^{2}}{\lambda_{\alpha+\beta
}\lambda_{\beta}}\left( \lambda_{\alpha}^{2}-\left(
\lambda_{\alpha+\beta}-\lambda_{\beta}\right) ^{2}\right)\nonumber.
\end{align}
\end{proposition}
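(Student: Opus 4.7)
The plan is to combine a general formula for the Ricci tensor of a reductive compact homogeneous space with the explicit Weyl-basis description of $\mathfrak{u}=\mathfrak{k}_\Theta\oplus\mathfrak{m}$. For the off-diagonal vanishing I would invoke $\operatorname{Ad}(K_\Theta)$-invariance of $Ric_g$: extended $\mathbb{C}$-bilinearly to $\mathfrak{m}^{\mathbb{C}}$, the Ricci tensor is invariant under the adjoint action of the maximal torus $T\subset K_\Theta$, and since $X_\alpha$ is a $T$-weight vector of weight $\alpha$, the scalar $Ric(X_\alpha,X_\beta)$ transforms with weight $\alpha+\beta$ and so must vanish whenever $\alpha+\beta\neq 0$. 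This covers every case of the first formula; the diagonal case $\alpha+\beta=0$ is not excluded by the hypothesis but is instead computed by the second formula.

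For the diagonal piece I would start from the standard Besse/Wang--Ziller expression for the Ricci tensor of $(U/K_\Theta,g)$ with $g$ of the form (\ref{inner product}). Working with a $(-B)$-orthonormal basis adapted to the decomposition $\mathfrak{u}=\mathfrak{k}_\Theta\oplus\mathfrak{m}_{\xi_1}\oplus\cdots\oplus\mathfrak{m}_{\xi_s}$, this expression naturally splits $Ric(X,X)$ for $X\in\mathfrak{m}_{\xi_i}$ into three pieces: a constant term coming from the bi-invariant background metric $-B$; a term that depends only on the $\mathfrak{k}_\Theta$-components of the brackets $[X,\cdot]$ and is therefore independent of the moduli $\lambda_i$; and a term depending on the $\mathfrak{m}$-components of the brackets, carrying the whole dependence on the $\lambda_i$. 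After complexification the same formula gives $Ric(X_\alpha,X_{-\alpha})$ by substituting $X=X_\alpha$, $Y=X_{-\alpha}$.

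The rest is a bookkeeping exercise using the Weyl-basis relations (\ref{base weyl}). Since $[X_\alpha,X_\phi]=N_{\alpha,\phi}X_{\alpha+\phi}$, this bracket lies in $\mathfrak{k}_\Theta^{\mathbb{C}}$ exactly when $\alpha+\phi\in R_\Theta\cup\{0\}$: the case $\phi=-\alpha$ together with (\ref{igualdade}) produces the term $B(\alpha,\alpha)$, while the remaining $\phi\in R_\Theta$ with $\alpha+\phi\in R$ yield $\sum N_{\alpha,\phi}^{2}$. The bracket lies in $\mathfrak{m}^{\mathbb{C}}$ when $\alpha+\phi\in R_M$; here I would group the sum over $\beta\in R_M$ with $\alpha+\beta\in R_M$ by triples $\alpha+\beta+\gamma=0$ and use the symmetries $N_{\alpha,\beta}=-N_{\beta,\alpha}=-N_{-\alpha,-\beta}=N_{\beta,\gamma}=N_{\gamma,\alpha}$ to collect the cross terms in $\lambda_\alpha,\lambda_\beta,\lambda_{\alpha+\beta}$ into the symmetric expression $\lambda_\alpha^{2}-(\lambda_{\alpha+\beta}-\lambda_\beta)^{2}$ with prefactor $\tfrac{1}{4}\,N_{\alpha,\beta}^{2}/(\lambda_\beta\lambda_{\alpha+\beta})$. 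The main obstacle is precisely this last grouping: one must track all factors of two arising from triples vs.\ pairs, and verify that the three contributions in the abstract Ricci formula assemble exactly into the three summands stated in the proposition.
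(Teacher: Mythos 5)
The paper does not actually prove this proposition: it is imported verbatim from \cite{Arv art}, so there is no in-paper argument to compare yours against. Your outline is essentially the derivation given in that reference --- the general Ricci formula for a reductive compact homogeneous space specialized to the Weyl basis, with the bracket split into its $\mathfrak{k}_{\Theta}^{\mathbb{C}}$- and $\mathfrak{m}^{\mathbb{C}}$-components producing the three summands --- and your weight argument for the off-diagonal part is correct (indeed it yields $Ric(X_{\alpha},X_{\beta})=0$ for \emph{all} $\alpha+\beta\neq 0$, which is stronger than the stated hypothesis $\alpha+\beta\notin R_{M}$, and you rightly note that the case $\alpha+\beta=0$ must be read as governed by the second formula). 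The only unverified step is the one you flag yourself, the tracking of constants in the $\lambda$-dependent sum; that computation is routine given the symmetries $N_{\alpha,\beta}=-N_{\beta,\alpha}=N_{\beta,\gamma}=N_{\gamma,\alpha}$ for $\alpha+\beta+\gamma=0$, and carrying it out is exactly what the cited source does, so the plan is sound.
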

Since $Ric(\kappa
g)=Ric(g)$ ($\kappa\in\mathbb{R}$), one can normalize the Einstein equation $Ric(g)=c\cdot g$
choosing an appropriate value for $c$ or for some $\lambda_{\alpha}$.
\begin{remark}\label{remark}
Although (\ref{Ric}) is not in terms of t-roots, if $\alpha, \beta\in R_M$ are two different roots that determine the same t-root, i.e.  $k(\alpha)=k(\beta)$, then $\lambda_{\alpha}=\lambda_{\beta}$ and $Ric(X_{\alpha},X_{-\alpha})=Ric(X_{\beta},X_{-\beta})$.
\end{remark}

In \cite{Park-Sakane}, Park-Sakane computed the Ricci tensor in a similar way. In their  formula appears the dimension $d_i$ of each irreducible submodules $\mf{m_i}$, while (equivalently) the equation (\ref{Ric}) depends on the amounts of factors $U(n_i)$ in the isotropy subgroup $K$.  Actually Park-Sakane formula is very useful when one wants to describe the Ricci tensor on homogeneous spaces with few isotropy summands or maximal flag manifolds (see for example \cite{w-z}, \cite{Arv-Chry} \cite{Sakane}). The advantage of using (\ref{Ric}) is that we can examine at once the Einstein equation for different families of flag manifolds, of the same type, in terms of the size and the amounts of $U(n)$-factors in the isotropy subgroup $K$. We will use Proposition \ref{Ricci} to complete the list of the algebraic Einstein system for all generalized flag manifolds of classical Lie groups.

\section{Isometric and non isometric metrics} 

We discuss the problem of determining if two invariant Einstein metrics on $\F$ are isometric or non isometric. 

Let $\F$ be a flag manifold with isotropy decomposition 
$$
\m=\m_{1}\oplus\cdots\oplus\m_{s}
$$
and denote by $d=\dim \m=\dim\F$ and $d_i=\dim\m_i$, $i=1,\dots,s$. Given an invariant Einstein metric $g=(\lambda_1, \dots, \lambda_s)$ on $\F$, its volume is given by $V_g=\prod\limits_{i=1}^{s} \lambda_i^{d_i}$.  Consider the scale
$$
H_g=V^{1/d}S_g,
$$ 
where $S_g=\sum\limits_{i=1}^{s}d_i r_i$ is the scalar curvature of $g$,  $V=V_g/V_B$ and $V_B$ denotes the volume of the normal metric induced by the negative of the Killing form in $U$ (compact real form of $G$).  We normalize $V_B=1$, then $H_g=V_g^{1/d}S_g$. It is known that $H_g$ is a scale invariant under a common scaling of the parameter $\lambda_i$ (see \cite{Arv-Chry} or \cite{w-z}).

If $g_1$ and $g_2$ are two invariant Einstein metrics on $\F$ are isometric then $H_{g_1}=H_{g_2}$. Thus if $H_{g_1}\neq H_{g_2}$ then $g_1$ and $g_2$ are non isometric. In general, to determine if two invariant Einstein metrics are isometric is not a trivial problem (see for example \cite{Arv-Chry-Sakane}).

Now we note that if $g$ is an invariant Einstein metric then $S_g=c\cdot d$, where $c$ is the Einstein constant from $Ric_g(\cdot,\cdot)=cg(\cdot,\cdot)$. Besides, if $g$ has volume $V_g$ then $\widehat{g}= \dfrac{1}{V_g^{1/d}}g$ has volume $V_{\widehat{g}}=1$ and in this case $H_{\widehat{g}}=c d$, 
since 
$Ric_{\widehat{g}}(\cdot,\cdot)=Ric_{g}(\cdot,\cdot)=cg(\cdot,\cdot)$. So if $g_1$ and $g_2$ are two invariant Einstein metrics with different Einstein constants $c_1$ and $c_2$, then $g_1$ and $g_2$ are non isometric.

\section{Proof of Theorem A}


In this section we consider flag manifolds of the form
$Sp(n)/ U\left(
n_{1}\right) \times\cdots\times U\left( n_{s}\right)$, where $n\geq 3$ and $n=\sum n_{i}$. 


The next result was obtained in a different way in \cite{Graev}, we proved this theorem following the method used in \cite{Arv art} with the aim of introducing the notation.

\begin{theorem}\label{t-roots properties Cn}
The set $R_{\mathfrak{t}}$ of t-roots corresponding to the flag manifolds $Sp(n)/ U\left(
n_{1}\right) \times\cdots\times U\left( n_{s}\right)$ is a system of roots of type $C_{s}$.
\end{theorem}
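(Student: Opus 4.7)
The plan is to exhibit an explicit basis of $\mathfrak{t}^{*}$ with respect to which the image $R_{\mathfrak{t}}=k(R_{M})$ takes precisely the standard form of a $C_{s}$ root system. First I would fix the usual realization of $C_{n}$: using an orthonormal basis $\{\epsilon_{1},\dots,\epsilon_{n}\}$ of $i\mathfrak{h}_{\mathbb{R}}^{*}$, the roots are $R=\{\pm 2\epsilon_{i}\}\cup\{\pm\epsilon_{i}\pm\epsilon_{j}:i\ne j\}$, and a simple system is $\Sigma=\{\alpha_{1},\dots,\alpha_{n}\}$ with $\alpha_{i}=\epsilon_{i}-\epsilon_{i+1}$ for $i<n$ and $\alpha_{n}=2\epsilon_{n}$. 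The flag $Sp(n)/U(n_{1})\times\cdots\times U(n_{s})$ is then obtained by taking
$$
\Theta=\Sigma\setminus\{\alpha_{n_{1}},\,\alpha_{n_{1}+n_{2}},\,\dots,\,\alpha_{n_{1}+\cdots+n_{s-1}},\,\alpha_{n}\},
$$
i.e. the union of $s$ type-$A_{n_{a}-1}$ subdiagrams; I would check this by matching the semisimple part of $\mathfrak{k}_{\Theta}$ with $\mathfrak{sl}(n_{1},\mathbb{C})\oplus\cdots\oplus\mathfrak{sl}(n_{s},\mathbb{C})$ and noting that $\dim\mathfrak{t}=|\Sigma\setminus\Theta|=s$ supplies the $s$ central $U(1)$-factors.

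Next I would partition $\{1,\dots,n\}$ into the consecutive blocks $I_{a}=\{n_{1}+\cdots+n_{a-1}+1,\dots,n_{1}+\cdots+n_{a}\}$. Every simple root of $\Theta$ has the form $\epsilon_{i}-\epsilon_{i+1}$ with $i,i+1$ in the same block, and the induced $R_{\Theta}$ is exactly $\{\epsilon_{i}-\epsilon_{j}:i,j\in I_{a}\text{ for some }a,\ i\ne j\}$. From the characterization $\mathfrak{t}=\{H\in i\mathfrak{h}_{\mathbb{R}}:\alpha(H)=0\text{ for all }\alpha\in R_{\Theta}\}$ recalled in Section~2, it follows that $\epsilon_{i}|_{\mathfrak{t}}$ depends only on the block $a$ containing $i$. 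Setting $\xi_{a}:=k(\epsilon_{i})$ for any $i\in I_{a}$, I would show $\{\xi_{1},\dots,\xi_{s}\}$ is a basis of $\mathfrak{t}^{*}$ by exhibiting the obvious dual vectors in $\mathfrak{t}$ (elements of $i\mathfrak{h}_{\mathbb{R}}$ that are constant on each block) and verifying linear independence.

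Then I would run through $R_{M}=R\setminus R_{\Theta}$ case by case and compute images under $k$: the long roots $\pm 2\epsilon_{i}$ with $i\in I_{a}$ map to $\pm 2\xi_{a}$; the short roots $\pm(\epsilon_{i}+\epsilon_{j})$ with $i,j\in I_{a}$ also map to $\pm 2\xi_{a}$ (note they lie in $R_{M}$, not $R_{\Theta}$, since their $\epsilon$-coefficient sum is $\pm 2$ whereas elements of $\langle\Theta\rangle$ have coefficient sum $0$); the short roots with $i\in I_{a}$, $j\in I_{b}$, $a\ne b$ map to $\pm(\xi_{a}\pm\xi_{b})$. The remaining roots $\pm(\epsilon_{i}-\epsilon_{j})$ with $i,j$ in the same block are excluded because they lie in $R_{\Theta}$. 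Thus
$$
R_{\mathfrak{t}}=\{\pm 2\xi_{a}:1\le a\le s\}\cup\{\pm\xi_{a}\pm\xi_{b}:1\le a<b\le s\},
$$
which is the standard $C_{s}$ root pattern. To finish, I would observe that the Killing form on $\mathfrak{h}_{\mathbb{R}}^{*}$ restricts to a positive-definite form on $\mathfrak{t}^{*}$ under which the $\xi_{a}$ are mutually orthogonal of equal length (a consequence of the $\epsilon_{i}$ being orthonormal and of the block structure), so the combinatorial identification above is in fact an isomorphism of root systems onto $C_{s}$.

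The bookkeeping above is essentially mechanical; the point that needs real care, and which I anticipate as the main obstacle, is establishing rigorously that $R_{\Theta}$ consists of \emph{exactly} the within-block differences $\epsilon_{i}-\epsilon_{j}$ and contains no long roots or sums $\epsilon_{i}+\epsilon_{j}$, because this is what guarantees that the long t-roots $\pm 2\xi_{a}$ actually appear in $R_{\mathfrak{t}}$ and that the short t-roots $\pm(\xi_{a}-\xi_{b})$ do not collapse to zero. Once this containment is pinned down precisely, together with the linear independence of the $\xi_{a}$, both the enumeration and the root-system verification follow at once.
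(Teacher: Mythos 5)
Your proposal follows essentially the same route as the paper: identify $R_{\Theta}$ as the within-block differences $\varepsilon_a^i-\varepsilon_b^i$, describe $\mathfrak{t}$ as the block-constant part of the Cartan subalgebra, and push $R_M$ through the restriction map $k$ to obtain $\{\pm 2\delta_a\}\cup\{\pm(\delta_a\pm\delta_b)\}$, using that $k(\varepsilon_a^i+\varepsilon_b^i)=k(2\varepsilon_a^i)$. Your bookkeeping of $\Theta$, $R_\Theta$, $R_M$ and of the images under $k$ is correct and matches the paper's computation.

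The one place where you go beyond the paper --- the closing claim that the $\xi_a$ are mutually orthogonal \emph{of equal length} for the form induced by the Killing form, so that the identification with $C_s$ is a metric isomorphism of root systems --- is not correct in general. Identifying $\mathfrak{t}^{*}$ with $(\operatorname{span}R_{\Theta})^{\perp}\subset i\mathfrak{h}_{\mathbb{R}}^{*}$, the restriction map becomes orthogonal projection, so $\xi_a$ corresponds to $\tfrac{1}{n_a}\sum_{i\in I_a}\epsilon_i$ and has squared length proportional to $1/n_a$; the lengths agree only when $n_1=\cdots=n_s$. For unequal block sizes the set $\{\pm 2\xi_a,\ \pm\xi_a\pm\xi_b\}$ then fails the integrality axiom for the induced inner product (for instance $2\langle\xi_1+\xi_2,\xi_1-\xi_2\rangle/\langle\xi_1-\xi_2,\xi_1-\xi_2\rangle=2(|\xi_1|^2-|\xi_2|^2)/(|\xi_1|^2+|\xi_2|^2)$ need not be an integer). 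The statement should therefore be read, as the paper's proof does, as the combinatorial assertion that $R_{\mathfrak{t}}$ has the standard $C_s$ pattern in the basis $\delta_1,\dots,\delta_s$; that is all that is proved and all that is used in the sequel. Either drop the metric claim or restrict it to the equal-block case $n_1=\cdots=n_s$.
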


\begin{proof}
A Cartan subalgebra of $\mathfrak{sp}(n,\mathbb{C})$ consists in
taking matrices of the form
\begin{equation}
\mf{h}=
\begin{pmatrix}
\Lambda & 0\\
0 & -\Lambda
\end{pmatrix}\label{alg cartan Cl}
\end{equation}
where $\Lambda=\mathrm{diag}(\varepsilon_{1},\ldots,\varepsilon_{n})$, $\varepsilon_{i}%
\in\mathbb{C}$. Following the notation of \cite{Arv art}, we will denote the linear functional $\mf{h}\mapsto\pm 2\varepsilon_{i}$ and $\mf{h}\mapsto\pm(\varepsilon_{i}\pm\varepsilon_{j})$ by $\pm 2\varepsilon_{i}$ and $\pm(\varepsilon_{i}\pm\varepsilon_{j})$ respectively. Thus the root system is
\begin{equation}
R=\{  \pm\left(  \varepsilon_{i}\pm\varepsilon_{j}\right);\,1\leq i<j\leq
n\}  \cup\{ \pm2\varepsilon_{i};\,1\leq i\leq
n\}.\label{Pi Cl}
\end{equation}
The root system for the subalgebra $\mathfrak{k}_{\Theta}^{\mathbb{C}
}=\mathfrak{sl}\left(  n_{1},\mathbb{C}\right)
\times\cdots\times\mathfrak{sl}\left( n_{s},\mathbb{C}\right)$ is
given by
\[
R_{\Theta} =\{  \pm\left(  \varepsilon_{a}^{i}-\varepsilon_{b}
^{i}\right)  ;\,1\leq a<b\leq n_{i},1\leq i\leq s \}.
\]
Then
\[
R_{M}=\{  \pm (  \varepsilon_{a}^{i}\pm\varepsilon_{b}
^{j});\,1\leq i<j\leq s\} \\
\cup\{  \pm\left(
\varepsilon_{a}^{i}+\varepsilon_{b}^{i}\right) ;\,1\leq i\leq
s,1\leq a< b\leq n_{i}\}
\]
and the algebra $\mf{t}$ has the form
\[
\mf{t}=
\begin{pmatrix}
\Lambda & 0\\
0 & -\Lambda
\end{pmatrix}
\]
with
$
\Lambda
=\mathrm{diag}\left(\varepsilon^{1}_{n_{1}},\ldots,\varepsilon^{1}_{n_{1}},\varepsilon^{2}_{n_{2}},\ldots,\varepsilon^{3}_{n_{3}},\ldots,
\varepsilon^{s}_{n_{s}},\ldots,\varepsilon^{s}_{n_{s}}\right)
$.
Here each $\varepsilon^{i}_{n_{i}}$ appears exactly $n_{i}$
times, $i=1,\ldots,s$. So restricting the roots of
$R_{M}$ in $\mf{t}$, and using the notation
$\delta_{i}=k(\varepsilon^{i}_{a})$, we obtain the t-root set:
\[
R_{\mathfrak{t}}=\{\pm\left(\delta_{i}-\delta_{j}\right),\pm\left(\delta_{i}+\delta_{j}\right);\,1\leq
i<j\leq s\}\cup\{\pm 2\delta_{i};\,1\leq i\leq
s\}.
\]
Note that
$k(\varepsilon_{a}^{i}+\varepsilon_{b}^{i})
=k(2\varepsilon_{a}^{i})$, $ 1\leq i\leq s$. In particular, there exist $s^{2}$ positive t-roots.
\end{proof}

Next we are going to compute the Einstein system for a invariant metric on $Sp(n)/ U\left(
n_{1}\right) \times\cdots\times U\left( n_{s}\right)$. The Killing form of
$\mathfrak{sp(n)}$ is given by
$$B\left( X,Y\right) =2\left( n+1\right)\mathrm{tr}(XY),$$ 
and
$$ B( \alpha,\alpha)=\left\{
\begin{array}
[c]{cc}
1/(n+1), & \text{if} \quad \alpha=\pm2\varepsilon_{i},\\
1/2(n+1), & \text{if} \quad \alpha= \pm ( \varepsilon
_{i}\pm\varepsilon_{j}),\quad 1\leq i<j\leq
n.
\end{array}
\right.
$$
Then the eigenvectors $X_{\alpha}\in \mf{g}_{\alpha}$ satisfying (\ref{base weyl})  are
\begin{align*}
&X_{\pm\left(\varepsilon_{i}-\varepsilon_{j}\right)}=\pm\frac
{1}{2\sqrt{n+1}}E_{\pm\left(
\varepsilon_{i}-\varepsilon_{j}\right)},\quad X_{\pm\left(
\varepsilon_{i}+\varepsilon_{j}\right)}=\pm\frac{1}
{2\sqrt{n+1}}E_{\pm\left(  \varepsilon_{i}+\varepsilon_{j}\right)
},\quad 1\leq i<j\leq n;&\\ \\&
X_{\pm2\varepsilon_{i}}=\pm\frac{1}{\sqrt{2\left(  n+1\right)  }}
E_{\pm2\varepsilon_{i}},\quad 1\leq i\leq n,
\end{align*}
where $E_{\alpha}$ denotes the canonical eigenvectors of $\mf{g}_{\alpha}$. It is convenient to use the following notation
\begin{align*}
E_{ab}^{ij}  &  =X_{\varepsilon_{a}^{i}-\varepsilon_{b}^{j}},\quad
F_{ab}^{ij}=X_{\varepsilon_{a}^{i}+\varepsilon_{b}^{j}},\ F_{-ab}^{ij}=X_{-\left(\varepsilon_{a}^{i}+\varepsilon_{b}^{j}\right)
},\quad 1\leq i<j\leq s;\\
F_{ab}^{i}  &  =X_{\varepsilon_{a}^{i}+\varepsilon_{b}^{i}},\ F_{-ab}^{-i}=X_{-\left(  \varepsilon_{a}^{i}+\varepsilon_{b}^{i}\right)
},\quad 1\leq
a\neq b\leq n_{i};\\ G_{a}^{i} & =X_{2\varepsilon_{a}^{i}},\quad
G_{-a}^{-i}=X_{-2\varepsilon_{a}^{i}},\text{ }1\leq i\leq s.\nonumber
\end{align*}
An invariant metric on
$\mathbb{F}_{C}(n_{1},\ldots,n_{s})$ will be denoted by
\begin{align}\label{inv metric Cl}
g_{ij}  &  =g\left(  E_{ab}^{ij},E_{ba}^{ji}\right), \quad f_{ij}=g\left(
F_{ab}^{ij},F_{-ab}^{-ij}\right)  ,\quad 1\leq i<j\leq s;\\
h_{i}  &  =g\left(  G_{a}^{i},G_{-a}^{-i}\right),\quad l_{i}=g\left( F_{ab}^{i},F_{-ab}^{-i}\right)  ,\quad 1\leq i\leq s.\nonumber
\end{align}
Since
$k(\varepsilon_{a}^{i}+\varepsilon_{b}^{i})
=k(2\varepsilon_{a}^{i})=2k(\varepsilon_{a}^{i})$ it follows
\[
l_{i}=h_{i},\quad 1\leq i\leq s,
\]
by remark \ref{remark}.

Considering short and long roots of $\mathfrak{sp}(n)$, one can see that the square of structural constants are given by
\begin{align*}
& N_{\left(  \varepsilon_{i}+\varepsilon_{j}\right)  ,\pm\left(
\varepsilon _{i}-\varepsilon_{j}\right)
}^{2}=N_{\pm2\varepsilon_{j},\left(
\varepsilon_{i}\mp\varepsilon_{j}\right)
}^{2}=N_{-2\varepsilon_{i},\left(
\varepsilon_{i}\pm\varepsilon_{j}\right)  }^{2}=\frac{1}{2\left(
n+1\right) }, \quad i\neq j;\\
& N_{\left(  \varepsilon_{i}-\varepsilon_{j}\right)
,\alpha}^{2}=N_{\left( \varepsilon_{i}+\varepsilon_{j}\right)
,\beta}^{2}=\frac{1}{4\left( n+1\right)}
\end{align*}
if $\alpha\in\{  \left(
\varepsilon_{k}-\varepsilon_{i}\right)  ,\left(
\varepsilon_{j}-\varepsilon_{k}\right)  ,\left(  \varepsilon_{j}
+\varepsilon_{l}\right)  ,-\left(
\varepsilon_{i}+\varepsilon_{p}\right): \,p\neq i;l\neq
j; k\neq i,j; i\neq j\}$ and
$\beta\in\{  -\left(  \varepsilon_{i}+\varepsilon_{k}\right)
,-\left(  \varepsilon_{j}+\varepsilon_{l}\right):\, k\neq
j;l\neq i\}$.

In the next table we compute $R_{\Theta}(\alpha)$ and $R_M(\alpha)$ for each $\alpha\in R_M$.

\begin{landscape}
\begin{small}
\begin{table}
\caption{The sets $R_{\Theta}\left(\alpha\right)$ and $R_{M}\left(\alpha\right)$ for $\mathbb{F}_{C}(n_{1},\ldots,n_{s})$}
\begin{tabular}{ccccccc}
  \hline
  $\alpha\in R_{M}$ &  &  &$R_{\Theta}\left(\alpha\right)$ is the union of &  & &$R_{M}\left(\alpha\right)$ is the union of  \\
  \hline
 &  &  &  &  &  &  \\
$\varepsilon_{c} ^{k}-\varepsilon_{d}^{t}$ &  &  &$\{ \varepsilon_{a}^{k}-\varepsilon_{c}^{k}:\scriptsize{1\leq a\leq
n_{k},a\neq c}\}$  &  &  & $ \{  \left(  \varepsilon_{d}^{t}-\varepsilon_{a}^{i}\right)
,\left(
\varepsilon_{a}^{i}-\varepsilon_{c}^{k}\right)  ,\left(  \varepsilon_{a}%
^{i}+\varepsilon_{d}^{t}\right)  ,-\left(
\varepsilon_{a}^{i}+\varepsilon _{c}^{k}\right)\}$\\
\scriptsize{$1\leq k<t\leq s$}  &  & &$\{
\varepsilon_{d}^{t}-\varepsilon_{a}^{t}:\scriptsize{1\leq a\leq n_{t},a\neq
d}\} $   &  &  & \footnotesize{$1\leq i\leq
s,i\neq k,t\quad\text{and}\quad 1\leq a\leq n_{i}$} \\
&  &  &  &  &  & \\
 &  &  &  &  &  & $\{  \left(  \varepsilon_{a}^{k}+\varepsilon_{d}^{t}\right)
,-\left( \varepsilon_{a}^{k}+\varepsilon_{c}^{k}\right):\scriptsize{1\leq
a\leq n_{k}}\}$   \\
&  &  &  &  &  &$\{  -\left(
\varepsilon_{a}^{t}+\varepsilon_{c}^{k}\right)  ,\left(
\varepsilon_{a}^{t}+\varepsilon_{d}^{t}\right):\scriptsize{1\leq a\leq
n_{t}}\}$  \\
 \hline
 &  &  &  &  &  &  \\
$\varepsilon_{c}^{k}+\varepsilon_{d}^{t}$ &  &  &$\{ \varepsilon_{a}^{k}-\varepsilon_{c}^{k}:\scriptsize{1\leq a\leq
n_{k},a\neq c}\}$ &  &  & $ \{  \left(  \varepsilon_{a}^{i}-\varepsilon_{c}^{k}\right)
,\left(
\varepsilon_{a}^{i}-\varepsilon_{d}^{t}\right)  ,-\left(  \varepsilon_{a}%
^{i}+\varepsilon_{c}^{k}\right)  ,-\left(
\varepsilon_{a}^{i}+\varepsilon
_{d}^{t}\right)\}$  \\
\scriptsize{$1\leq k<t\leq s$} &  &  &$\{
\varepsilon_{a}^{t}-\varepsilon_{d}^{t}:\scriptsize{1\leq a\leq n_{t},a\neq
d}\}$  &  &  & \footnotesize{$ 1\leq i\leq s,i\neq k,t;1\leq a\leq n_{i}$} \\
 &  &  &   &  &  &  \\
 &  &  &  &  &  & $ \{  \left(  \varepsilon_{a}^{t}-\varepsilon_{c}^{k}\right),-\left( \varepsilon_{a}^{t}+\varepsilon_{d}^{t}\right):\scriptsize{1\leq a\leq n_{t}}\}$  \\
 &  &  &  &  &  & $\{  \left(
\varepsilon_{a}^{k}-\varepsilon_{d}^{t}\right)  ,-\left(
\varepsilon_{a}^{k}+\varepsilon_{c}^{k}\right):\scriptsize{1\leq a\leq
n_{k}}\} $  \\
 \hline
 &  &  &  &  &  &  \\
$\varepsilon_{c}^{k}+\varepsilon_{d}^{k}$ &  &  & $\{ \left(  \varepsilon_{a}^{k}-\varepsilon_{c}^{k}\right)
,\left( \varepsilon_{a}^{k}-\varepsilon_{d}^{k}\right)\}$  &  &  & $\{ \left(  \varepsilon_{a}^{i}-\varepsilon_{c}^{k}\right)
,\left(
\varepsilon_{a}^{i}-\varepsilon_{d}^{k}\right)  ,-\left(  \varepsilon_{a}%
^{i}+\varepsilon_{c}^{k}\right)  ,-\left(
\varepsilon_{a}^{i}+\varepsilon _{d}^{k}\right)\}$ \\
\scriptsize{$1\leq k\leq s$} &  &  &\footnotesize{$1\leq
a\leq n_{k};a\neq
c,d$}  &  &  &\footnotesize{$1\leq i\leq
s;i\neq k$}  \\
\scriptsize{$1\leq c<d\leq n_{k}$} &  &  &  &  &  &  \\
 &  &  & $\{  \pm\left(  \varepsilon_{c}^{k}-\varepsilon_{d}^{k}\right)  \}$  &  &  &  \\
 \hline
&  &  &  &  &  &  \\
$2\varepsilon_{c}^{k}$&  &  & $\{  \varepsilon_{a}
^{k}-\varepsilon_{c}^{k}:\scriptsize{1\leq a\leq n_{k};a\neq c}\}$ &  &  &  $\{  \left(
\varepsilon
_{a}^{i}-\varepsilon_{c}^{k}\right)  ,-\left(  \varepsilon_{a}^{i}%
+\varepsilon_{c}^{k}\right):\scriptsize{1\leq i\leq s;i\neq k}\}$\\
\scriptsize{$1\leq k\leq s$}&  &  &  &  &  &  \\
&  &  &  &  &  &  \\
\hline
 \end{tabular}
 \end{table}
 \end{small}
 \end{landscape}

Using Proposition \ref{Ricci} we obtain the following result.

\begin{proposition}\label{Einstein equation Cn 1}
The Einstein equation for an invariant metric on
$Sp(n)/ U\left(
n_{1}\right) \times\cdots\times U\left( n_{s}\right)$ reduces to an
algebraic system where the number of unknowns and equations is $s^2$, given by

\begin{align*}
&\dfrac{1}{8(n+1)}\left\lbrace 2(n_{k}+n_{t})+\frac{( n_{k}+1) }{h_{k}f_{kt}}\left(
g_{kt}^{2}-\left( h_{k}-f_{kt}\right)  ^{2}\right)+\frac{\left(  n_{t}+1\right)  }{h_{t}f_{kt}}\left(  g_{kt}^{2}-\left(  h_{t}-f_{kt}\right)  ^{2}\right)\right. \\ \\
&\left. +\sum_{i\neq k,t}^{s}\frac{n_{i}}{g_{ik}g_{it}
}\left(  g_{kt}^{2}-\left(  g_{ik}-g_{it}\right)  ^{2}\right)+
\sum_{i\neq k,t}^{s}\frac{n_{i}}{f_{ik}f_{it}
}\left(  g_{kt}^{2}-\left(  f_{ik}-f_{it}\right)  ^{2}\right)\right\rbrace 
=c g_{kt},\,\quad 1\leq k\neq t\leq s;
\end{align*}

\begin{align*}
&\dfrac{1}{8(n+1)}\left\lbrace 2(n_{k}+n_{t})+\frac{\left( n_{k}+1\right) }{h_{k}g_{kt}}\left(
f_{kt}^{2}-\left( h_{k}-g_{kt}\right)  ^{2}\right)
+\frac{\left(  n_{t}+1\right)  }{h_{t}g_{kt}%
}\left(  f_{kt}^{2}-\left(  h_{t}-g_{kt}\right)  ^{2}\right)\right. \\ \\
&+\left. \sum_{i\neq k,t}^{s}\frac{n_{i}}{f_{it}g_{ik}%
}\left(  f_{kt}^{2}-\left(  f_{it}-g_{ik}\right)  ^{2}\right)
+\sum_{i\neq k,t}^{s}\frac{n_{i}}{f_{ik}g_{it}
}\left(  f_{kt}^{2}-\left(  f_{ik}-g_{it}\right)  ^{2}\right)\right\rbrace 
=cf_{kt},\quad 1\leq k\neq t\leq s;\\ \\
&\dfrac{1}{8(n+1)}\left\lbrace 4(n_{k}+1)+2\sum_{i\neq k}^{s}\frac{n_{i}}{f_{ik}g_{ik}%
}\left(  h_{k}^{2}-\left(  f_{ik}-g_{ik}\right)  ^{2}\right)\right\rbrace =c h_{k},\quad 1\leq k\leq s.
\end{align*}

\end{proposition}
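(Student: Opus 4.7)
The plan is to apply Proposition~\ref{Ricci} three times, once for a representative of each of the three types of t-roots exhibited in Theorem~\ref{t-roots properties Cn}. By Remark~\ref{remark}, two roots $\alpha,\beta\in R_M$ with $k(\alpha)=k(\beta)$ give the same Ricci component, so it suffices to treat one representative per t-root, yielding the advertised $s^2$ equations. I would take $\alpha=\varepsilon_c^k-\varepsilon_d^t$ (producing the $g_{kt}$-equation), $\alpha=\varepsilon_c^k+\varepsilon_d^t$ (producing the $f_{kt}$-equation) for $k\neq t$, and $\alpha=2\varepsilon_c^k$ (producing the $h_k$-equation), noting that the $l_i$-equations collapse to the $h_i$-equations by the identification $l_i=h_i$ already established.

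For each representative $\alpha$ I would split the right-hand side of Proposition~\ref{Ricci} into its metric-free part $B(\alpha,\alpha)+\sum_{\phi\in R_{\Theta}(\alpha)}N_{\alpha,\phi}^{2}$ and its metric-dependent part. The metric-free part is read off directly from the values of $B(\alpha,\alpha)\in\{1/(2(n+1)),\,1/(n+1)\}$ and $N_{\alpha,\phi}^{2}\in\{1/(4(n+1)),\,1/(2(n+1))\}$ listed just before the statement, together with the cardinalities of $R_{\Theta}(\alpha)$ that can be read from the first column of Table~1. A short check produces the constant pieces $2(n_k+n_t)/(8(n+1))$ in the $g_{kt}$- and $f_{kt}$-equations and $4(n_k+1)/(8(n+1))$ in the $h_k$-equation, which match what appears in the proposition after clearing the global factor $1/(8(n+1))$.

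For the metric-dependent sum $\tfrac14\sum_{\beta\in R_M(\alpha)}\tfrac{N_{\alpha,\beta}^{2}}{\lambda_{\beta}\lambda_{\alpha+\beta}}\bigl(\lambda_{\alpha}^{2}-(\lambda_{\alpha+\beta}-\lambda_{\beta})^{2}\bigr)$, I would enumerate $\beta$ via the second column of Table~1 and partition the sum according to the pair $(\lambda_\beta,\lambda_{\alpha+\beta})$. Remark~\ref{remark} allows both $\lambda$'s to be pulled outside each sub-sum, reducing the task to counting multiplicities. The subsets indexed by $i\neq k,t$ contribute cleanly a factor $n_i$, giving the $\sum_{i\neq k,t}$ terms verbatim. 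The subsets indexed by $k$ or $t$ naively contribute $n_k$ or $n_t$ summands of identical shape, except that precisely one value of the internal index makes $\alpha+\beta$ (or $\beta$) a long root $\pm 2\varepsilon^{\bullet}$, for which the structural constant jumps from $1/(4(n+1))$ to $1/(2(n+1))$. This single ``long-root'' summand therefore counts as two ordinary ones, turning $n_k$ into $n_k+1$ (resp.\ $n_t$ into $n_t+1$) in the final coefficients, which is exactly the shape of the first two terms inside the braces.

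Setting $\mathrm{Ric}(X_\alpha,X_{-\alpha})=c\,g(X_\alpha,X_{-\alpha})$ in each of the three cases and reading off $g(X_\alpha,X_{-\alpha})\in\{g_{kt},f_{kt},h_k\}$ from the notation \eqref{inv metric Cl} then yields the three equations of the proposition. The main obstacle is purely combinatorial: for each of the four subsets in the second column of Table~1, one has to match every $\beta$ with $\alpha+\beta$, read off to which t-root each belongs so as to identify $\lambda_\beta$ and $\lambda_{\alpha+\beta}$, and correctly locate the ``diagonal'' index at which a long root appears so the structural-constant doubling is absorbed cleanly into the $+1$ shift of the multiplicities $n_k+1$ and $n_t+1$.
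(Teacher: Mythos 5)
Your proposal is correct and follows essentially the same route as the paper: the paper's proof consists precisely of applying Proposition \ref{Ricci} to one representative root for each positive t-root, reading $R_{\Theta}(\alpha)$ and $R_{M}(\alpha)$ from Table 1 together with the listed values of $B(\alpha,\alpha)$ and $N_{\alpha,\beta}^{2}$, and using Remark \ref{remark} to identify $\lambda$'s (in particular $l_i=h_i$). Your explanation of the coefficients $n_k+1$ and $n_t+1$ --- the single long-root summand whose squared structural constant is $1/(2(n+1))$ instead of $1/(4(n+1))$ and hence counts double --- is exactly the mechanism at work.
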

Now we consider the flag manifold

$$
Sp(n)/ U\left(
m\right) \times\cdots\times U\left( m\right),
$$
where $n=ms$. By the Proposition \ref{Einstein equation Cn 1}, the Einstein equations is given by 

\begin{align*}
&\dfrac{1}{8(n+1)}\left\lbrace 4m+\frac{( m+1) }{h_{k}f_{kt}}\left(
g_{kt}^{2}-\left( h_{k}-f_{kt}\right)  ^{2}\right)+\frac{\left(  m+1\right)  }{h_{t}f_{kt}}\left(  g_{kt}^{2}-\left(  h_{t}-f_{kt}\right)  ^{2}\right)\right. \\ \\
&+\left. \sum_{i\neq k,t}^{s}\frac{m}{g_{ik}g_{it}
}\left(  g_{kt}^{2}-\left(  g_{ik}-g_{it}\right)  ^{2}\right)+
\sum_{i\neq k,t}^{s}\frac{m}{f_{ik}f_{it}
}\left(  g_{kt}^{2}-\left(  f_{ik}-f_{it}\right)  ^{2}\right)\right\rbrace 
=cg_{kt},\,\quad 1\leq k\neq t\leq s;
\end{align*}

\begin{align*}
&\dfrac{1}{8(n+1)}\left\lbrace 4m+\frac{\left( m+1\right) }{h_{k}g_{kt}}\left(
f_{kt}^{2}-\left( h_{k}-g_{kt}\right)  ^{2}\right)
+\frac{\left(  m+1\right)  }{h_{t}g_{kt}%
}\left(  f_{kt}^{2}-\left(  h_{t}-g_{kt}\right)  ^{2}\right)\right. \\ \\
&+\left. \sum_{i\neq k,t}^{s}\frac{m}{f_{it}g_{ik}%
}\left(  f_{kt}^{2}-\left(  f_{it}-g_{ik}\right)  ^{2}\right)
+\sum_{i\neq k,t}^{s}\frac{m}{f_{ik}g_{it}
}\left(  f_{kt}^{2}-\left(  f_{ik}-g_{it}\right)  ^{2}\right)\right\rbrace 
=c f_{kt},\quad 1\leq k\neq t\leq s;\\ \\
&\dfrac{1}{8(n+1)}\left\lbrace 4(m+1)+2\sum_{i\neq k}^{s}\frac{m}{f_{ik}g_{ik}%
}\left(  h_{k}^{2}-\left(  f_{ik}-g_{ik}\right)  ^{2}\right)\right\rbrace =c h_{k},\quad 1\leq k\leq s.
\end{align*}

If we consider an invariant metric satisfying $g_{ik}=f_{ik}=1$ and $h_k=h$, then the previous algebraic system reduces to following one
%
%

\begin{eqnarray*}
4m+2(m+1)(2-h)+2m(n-2m)&=&8(n+1) c,\\
4(m+1)+2m(n-m)h^2&=&8(n+1)ch.
\end{eqnarray*}
In this way, we get that

\begin{eqnarray*}
h &=& \frac{2(m+1)+(n-2m+2)m \pm \sqrt{\Delta}}{2\left[(n-m)m+m+1\right]},\\ \\
c &=&\dfrac{4m(n-2m+2)[mn-m^2+m+1]+(m+1)[6mn-4m^2+4]\mp 2(m+1)\sqrt{\Delta}}{16(n+1)\left[(n-m)m+m+1\right]},
\end{eqnarray*}
where $\Delta=m^2n^2-4(m^3+m)n+(4m^4-8m^3+8m^2-4)$. 
Note that $\Delta\geq 0$ if 
$$
n m \geq 2 \left[ m^2+1+\sqrt{2(m^3+1)}\right] \geq 8
$$
since $m\geq 1$. It is easy to see that if $n>2m$ then $h>0$. Besides, these metrics are non isometric since $c_1\neq c_2$. 

If $n=m$ we obtain the isotropy irreducible space $Sp(n)/U(n)$ that (up to homotheties) admits a unique invariant metric which is Einstein, ( see 7.44, \cite{Besse}).  

For $m=1$ we obtain the Sakane's result \cite{Sakane}, which provides the invariant Einstein metrics $f=g=1, h=\dfrac{4+n\pm\sqrt{(n-8)n}}{2(n+1)}$ with $c=\dfrac{4n+n^2\mp \sqrt{(n-8)n}}{4(n+1)^2}$ on the full flag manifold $Sp(n)/ U\left(1\right)^n $. \qed

\begin{example}
If we fix $m=2$, then for each $n\geq 10$ the flag manifold $Sp(n)/ U\left(
2\right)^s$, $n=2s$,  admits at least two non K\"{a}hler (and non isometric) invariant Einstein metrics
\begin{eqnarray*}
1)\quad f &=& g=1, \quad h = \frac{n+1 + \sqrt{(n-5)^2-18}}{2n-1}\\ \\
2)\quad f &=& g=1, \quad h = \frac{n+1 - \sqrt{(n-5)^2-18}}{2n-1}.
\end{eqnarray*}

\end{example}
\begin{corollary}\label{eq full flag Cn}
The Einstein equations on the full flag manifold $Sp(n)/ U\left(1\right)^n $ reduce to an algebraic system of $n^2$ equations and unknowns $g_{ij}$, $f_{ij}$, $h_i$:

\begin{align*}
&4+\frac{2 }{h_{k}f_{kt}}\left(
g_{kt}^{2}-\left( h_{k}-f_{kt}\right)  ^{2}\right)+\frac{2  }{h_{t}f_{kt}}\left(  g_{kt}^{2}-\left(  h_{t}-f_{kt}\right)  ^{2}\right)\\ \\
&+\sum_{i\neq k,t}^{n}\frac{1}{g_{ik}g_{it}
}\left(  g_{kt}^{2}-\left(  g_{ik}-g_{it}\right)  ^{2}\right)+
\sum_{i\neq k,t}^{n}\frac{1}{f_{ik}f_{it}
}\left(  g_{kt}^{2}-\left(  f_{ik}-f_{it}\right)  ^{2}\right)
=g_{kt},\,\quad 1\leq k\neq t\leq n;
\end{align*}

\begin{align*}
&4+\frac{2 }{h_{k}g_{kt}}\left(
f_{kt}^{2}-\left( h_{k}-g_{kt}\right)  ^{2}\right)
+\frac{2  }{h_{t}g_{kt}%
}\left(  f_{kt}^{2}-\left(  h_{t}-g_{kt}\right)  ^{2}\right)\\ \\
&+\sum_{i\neq k,t}^{n}\frac{1}{f_{it}g_{ik}%
}\left(  f_{kt}^{2}-\left(  f_{it}-g_{ik}\right)  ^{2}\right)
+\sum_{i\neq k,t}^{n}\frac{1}{f_{ik}g_{it}
}\left(  f_{kt}^{2}-\left(  f_{ik}-g_{it}\right)  ^{2}\right)
=f_{kt},\quad 1\leq k\neq t\leq n;\\ \\
&8+2\sum_{i\neq k}^{n}\frac{1}{f_{ik}g_{ik}%
}\left(  h_{k}^{2}-\left(  f_{ik}-g_{ik}\right)  ^{2}\right)=h_{k},\quad 1\leq k\leq n.
\end{align*}

\end{corollary}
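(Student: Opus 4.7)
The plan is to obtain the corollary as the direct specialization of Proposition \ref{Einstein equation Cn 1} to the case $n_1=\cdots=n_s=1$, which forces $s=n$ and produces the full flag manifold $Sp(n)/U(1)^n$. No new Ricci computation is required; the entire task is a substitution followed by a single rescaling.

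First I would replace each parameter in the three equation families of Proposition \ref{Einstein equation Cn 1} by its specialized value: the coefficient $2(n_k+n_t)$ becomes $4$, each occurrence of $(n_k+1)$ or $(n_t+1)$ collapses to $2$, the term $4(n_k+1)$ in the $h_k$-equation becomes $8$, and every factor $n_i$ inside the sums $\sum_{i\neq k,t}^s$ and $\sum_{i\neq k}^s$ becomes $1$. The summation upper index $s$ is then relabeled as $n$ throughout. Next I would absorb the global prefactor $\frac{1}{8(n+1)}$ by invoking the normalization remark following Proposition \ref{Ricci}: since $Ric(\kappa g)=Ric(g)$, the Einstein constant may be prescribed at will, so fixing $c=\frac{1}{8(n+1)}$ clears the prefactor and leaves each right-hand side equal to $g_{kt}$, $f_{kt}$, or $h_k$, exactly as displayed.

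The count of $n^2$ equations and unknowns is immediate from Theorem \ref{t-roots properties Cn}, which yields $s^2=n^2$ positive t-roots. I do not foresee any genuine obstacle; the statement is in essence a transcription of already-proved data. The only place where an error could creep in is the bookkeeping of the ranges of the sums $\sum_{i\neq k,t}^s$ and $\sum_{i\neq k}^s$, so I would double-check the result by matching each displayed term in the corollary, one by one, against its counterpart in Proposition \ref{Einstein equation Cn 1}.
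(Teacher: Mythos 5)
Your proposal is correct and coincides with what the paper does: the corollary is stated as an immediate specialization of Proposition \ref{Einstein equation Cn 1} to $n_1=\cdots=n_s=1$, $s=n$, with the prefactor $\frac{1}{8(n+1)}$ removed by the normalization $c=\frac{1}{8(n+1)}$ permitted by the remark after Proposition \ref{Ricci}. The term-by-term substitution and the count $s^2=n^2$ check out exactly as you describe.
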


\section{Proof of Theorem B}

Now we consider the homogeneous spaces of the form $
SO(2n)/U(n_{1})\times\cdots\times U(n_{s})$, where $n\geq 4$ and $n=\sum n_{i}$. We see the Lie algebra $\mathfrak{so}\left( 2n,\mathbb{C}\right)$ as the algebra of the skew-symmetric matrices in even dimension. These matrices can be written as
\[
A=\left(\begin{array}{cc}
\alpha & \beta \\
\gamma & -\alpha^t
\end{array} \right)
\]
where $\alpha,\beta,\gamma$ are matrices $n\times n$ with $\beta,\gamma$ skew-symmetric.
A Cartan subalgebra of $\mathfrak{so}\left( 2n,\mathbb{C}\right)$
consists of matrices of the form 
\begin{equation}\label{Cartan Dn}
\mathfrak{h}=\{  \mathrm{diag}\left(
\varepsilon_{1},\ldots,\varepsilon_{n},-\varepsilon_{1},\ldots,-\varepsilon_{n}\right) ;\,\varepsilon_{i}%
\in\mathbb{C}\}  .
\end{equation}
The root system of the pair  $\left( \mathfrak{so}\left( 2n,\mathbb{C}\right),\mathfrak{h}\right)$ is given by
\begin{equation}\label{root system Dn}
R=\{  \pm\left(  \varepsilon_{i}\pm\varepsilon_{j}\right);\,1\leq i<j\leq n\}.
\end{equation}
The root system for the subalgebra
$\mathfrak{k}_{\Theta}^{\mathbb{C}}=\mathfrak{sl}\left(
n_{1},\mathbb{C}\right) \times\cdots\times\mathfrak{sl}\left(
n_{s},\mathbb{C}\right)$ is
$$
R_{\Theta}=\{  \pm\left(
\varepsilon_{c}^{i}-\varepsilon_{d}^{i}\right) ;\,1\leq c<d\leq
n_{i}\}  ,
$$
then
\[
R_{M}^{+}=\{
\varepsilon_{a}^{i}\pm\varepsilon_{b}^{j};\,1\leq i<j\leq s\}
\cup\{  \varepsilon_{a}^{i}+\varepsilon_{b}^{i};\,a<b\} .
\]

The subalgebra $\mf{t}$ is formed by matrices of the form
\[
\mathfrak{t}=\{\mathrm{diag}\left(
\varepsilon_{n_{1}}^{1},\ldots,\varepsilon_{n_{1}}^{1},\ldots,\varepsilon_{n_{s}}^{s},\ldots,\varepsilon_{n_{s}}^{s},-\varepsilon_{n_{1}}^{1},\ldots
,-\varepsilon_{n_{1}}^{1},\ldots,-\varepsilon_{n_{s}}^{s},\ldots,-\varepsilon_{n_{s}}^{s}\right)\in i\mathfrak{h}_{\mathbb{R}}\}
\]
where $\varepsilon_{n_{i}}^{i}$ appears exactly  $n_{i}$ times. By
restricting the roots of $R_{M}^{+}$ to $\mathfrak{t}$, and
using the notation $\delta_{i}=k(\varepsilon_{a}^{i})$, we obtain
the set of positive t-root:
\[
R^{+}_{\mathfrak{t}}=\{\delta_{i}\pm\delta_{j},2\delta_{i};1\leq
i<j\leq s\}.
\]
In particular there exist $s^{2}$ positive t-roots.

The Killing form on  $\mathfrak{so}\left(
2n\right)$ is given by  $B\left(  X,Y\right)  =2\left(
n-1\right)  trXY$ and $B\left(  \alpha ,\alpha\right)
=\frac{1}{2\left(  n-1\right)},$ for all
$\alpha\in R$. The eigenvectors $X_{\alpha}$ satisfying (\ref{base weyl}) are given by
$$
E_{ab}^{ij}=\frac{1}{2\sqrt{n-1}}E_{\varepsilon_{a}^{i}-\varepsilon_{b}^{j}
},\quad F_{ab}^{ij}=\frac{1}{2\sqrt{n-1}}E_{\varepsilon_{a}^{i}
+\varepsilon_{b}^{j}},
$$
$$
 F_{-ab}^{ij}=\frac{1}{2\sqrt{n-1}}E_{-(\varepsilon_{a}^{i}
+\varepsilon_{b}^{j})},\quad G_{ab}^{i}=\frac{1}{2\sqrt{n-1}}E_{\varepsilon
_{a}^{i}+\varepsilon_{b}^{i}}
$$
where $E_\alpha$ denotes the canonical eigenvector of $\mf{g}_{\alpha}$.
The non zero square of structures constants is  $N_{\alpha,\beta}^2=
1/4(n-1).$

The notation for the invariant scalar product on the base $\{ X_{\alpha}; \alpha\in R_{M}\}$ is given by
\begin{equation}\label{notation Dn}
g_{ij}=g(  E_{ab}^{ij},E_{ba}^{ji}),\quad f_{ij}=g(  F_{ab}^{ij}, F_{-ab}^{ij}),\quad h_{i}=g( G_{ab}^{i},G_{ba}^{i}),
\end{equation}
with $1\leq i<j\leq s$.
According to \cite{Arv art}, the Einstein equations on the spaces
$SO(2n)/U(n_{1})\times\cdots\times U(n_{s})$ reduce to an algebraic system of $s^{2}$ equations and $s^{2}$ unknowns $g_{ij},$ $f_{ij},h_{i}:$%

\[
 n_{i}+n_{j}+\frac{1}{2}\left\lbrace   \sum_{l\neq i,j}\frac{n_{l}}{g_{il}g_{jl}%
}\left(  g_{ij}^{2}-\left(  g_{il}-g_{jl}\right)  ^{2}\right)
+\sum_{l\neq i,j}\frac{n_{l}}{f_{il}f_{jl}}\left(
g_{ij}^{2}-\left(  f_{il}-f_{jl}\right) ^{2}\right)\right.  
\]%
\[
 + \left. \frac{n_{i}-1}{f_{ij}h_{i}}\left(  g_{ij}^{2}-\left(  f_{ij}%
-h_{i}\right)  ^{2}\right)  +\frac{n_{j}-1}{f_{ij}h_{j}}\left(  g_{ij}%
^{2}-\left(  f_{ij}-h_{j}\right)  ^{2}\right)  \right\rbrace   =4(n-1)c g_{ij},
\]

\[
  n_{i}+n_{j}+\frac{1}{2}\left\lbrace   \sum_{l\neq i,j}\frac{n_{l}}{g_{il}f_{jl}%
}\left(  f_{ij}^{2}-\left(  g_{il}-f_{jl}\right)  ^{2}\right)
+\sum_{l\neq i,j}\frac{n_{l}}{f_{il}g_{jl}}\left(
f_{ij}^{2}-\left(  f_{il}-g_{jl}\right) ^{2}\right)\right. 
\]%
\[
 +\left. \frac{n_{i}-1}{g_{ij}h_{i}}\left(  f_{ij}^{2}-\left(  g_{ij}%
-h_{i}\right)  ^{2}\right)  +\frac{n_{j}-1}{g_{ij}h_{j}}\left(  f_{ij}%
^{2}-\left(  g_{ij}-h_{j}\right)  ^{2}\right)   \right\rbrace  =4(n-1)c f_{ij},
\]

\[
  2\left(  n_{i}-1\right)  +\sum_{l\neq
i}\frac{n_{l}}{g_{il}f_{il}}\left( h_{i}^{2}-\left(
g_{il}-f_{il}\right)  ^{2}\right)   =4(n-1)c h_{i}.
\]


If we consider the invariant metric $g_{ij}=g$, $f_{ij}=f$ and $h_i=h$ on the space $SO(2n)/U(m)^s$, $m>1$, the Einstein equation reduce to the following algebraic system
\begin{eqnarray*}
 2m+\dfrac{1}{2}\left[m(s-2)+m\dfrac{g^2}{f^2}(s-2)\right]+\dfrac{m-1}{fh}(g^2-(f-h)^2)   &=&4(n-1)cg\\ \\
2m+ \dfrac{m}{gf}(s-2)(f^2-(g-f)^2)+\dfrac{m-1}{gh}(f^2-(g-h)^2)  &=&4(n-1)cf \\ \\
2(m-1)+\dfrac{m}{gf}(s-1)(h^2-(g-f)^2)&=&4(n-1)ch.
\end{eqnarray*}

In a similiar way, as in the case $C_n$, if $f=g=1$ we obtain 

\begin{eqnarray*}
 n+(m-1)(2-h) &=&4(n-1)c\\ \\
2(m-1)+m(s-1)h^2&=&4(n-1)ch.
\end{eqnarray*}
By solving explicitly this algebraic system one obtains the two non isometric metric of Theorem B.  \qed

The previous result does not applies to the full flag manifold $SO(2n)/U(1)^n$, because on this space any invariant metric does not depend on the parameter $h_i$, it is determined only by positive scalars $g_{ij}$ and $f_{ij}$. This case was treated in \cite{Sakane}.

\end{document}